\newtheorem{theorem}{Theorem}[section]
\newtheorem{proposition}[theorem]{Proposition}
\newtheorem{lemma}[theorem]{Lemma}
\newtheorem{remark}{Remark}[section]
\newcommand{\1}{\mathbb{1}}
\def\1{\,\rlap{\mbox{\small\rm 1}}\kern.15em 1}
\def\build#1_#2^#3{\mathrel{\mathop{\kern 0pt#1}\limits_{#2}^{#3}}}
\def\tend#1#2{\build\hbox to 12mm{\rightarrowfill}_{#1\rightarrow #2}^{ }}
\def\converge#1#2#3#4{\build\hbox to
#1mm{\rightarrowfill}_{#2\rightarrow #3}^{\hbox{\scriptsize #4}}}
\newcommand{\beq}{\begin{equation}}
\newcommand{\eeq}{\end{equation}}
\title{An important excited random walk counter-example}
\author{Rafael Santos$^1$}
\thanks{1. Supported by Capes with a scholarship}
\address{
\newline
Rafael Santos
\newline
Universidade Federal do Rio de Janeiro, Instituto de Matem\'atica, secretaria da p\'os-gradua\c{c}\~ao.
\newline  Caixa Postal 68530, 21945-970, Rio de Janeiro, Brasil.
\newline
e-mail: {\rm \texttt{rafaels@dme.ufrj.br}}
}
\subjclass[2010]{primary 60K37}
\keywords{Non Markovian Random Walks, Excited Random Walks, Law of Large Numbers}
\begin{document}

\maketitle

\vspace{-0.3cm}

\begin{abstract}
In this paper, we give a detailed construction of an example of excited random walk with speed zero in an ergodic random environment that have an infinite average number of cookies in each site. This example confirms that a result of Mountford, Pimentel and Valle ($2006$), which gives a sufficient condition for excited random walks in deterministic environment to have positive speed, can not be extended to ergodic random environment.\
\end{abstract}

\section{Introduction}

The excited random walk (ERW) in $\mathbb{Z}$ is a non Markovian random walk introduced by Zerner \cite{Zerner2004} that can be informally described in the following way: Initially to each vertex $z \in \mathbb{Z}$ we associate a non-negative number $M_z$ and will say that there are $M_z$ cookies on $z$; these cookies can change the jump probability of the particle. After the environment is settled, a particle will begin to move from an initial vertex $y_0 \in \mathbb{Z}$ and in each jump, it will choose one of the two neighbor positions according to the following rule: If the particle is in a vertex that doesn't have cookies anymore, it will jump to the right with probability $\frac{1}{2}$. Otherwise, if the vertex has at least one cookie, the particle will consume a cookie and jump to the right with fixed probability $p > \frac{1}{2}$.

Now for a formal description, consider an environment $\omega$ as an element of 
 
\vspace{-0.5cm}

$$\Omega = \left\{ (\omega (z,i))_{z \in \mathbb{Z},i \in \mathbb{N}} \mbox{  } | \mbox{  } \omega(z,i) \in [0,1], \forall i \in \mathbb{N} \mbox{ \ e \ } \forall z \in \mathbb{Z} \right\}.$$\

\vspace{-0.4cm}

The value of $\omega(z,i)$ gives the probability that the ERW will jump from $z$ to $z+1$ when it visits the state $z$ for the $i-th$ time. Fixing $\omega \in \Omega$ and $y_0 \in \mathbb{Z}$, we have that an ERW $(Y_n)_{n \geq 0}$ starting from $y_0$ in an environment $\omega$ is a stochastic processes with probability measure $P_{y_0, \omega}$ satisfying:\

\vspace{-0.4cm}

$$P_{y_0, \omega} [Y_0 = y_0] = 1,$$

\vspace{-0.6cm}

$$P_{y_0, \omega} [Y_{n+1} = Y_n + 1 | (Y_i)_{0 \leq i \leq n}] = \omega (Y_n, \# \{ i \leq n : Y_i = Y_n \}),$$

\vspace{-0.6cm}

$$P_{y_0, \omega} [Y_{n+1} = Y_n - 1 | (Y_i)_{0 \leq i \leq n}] = 1 - \omega (Y_n, \# \{i \leq n : Y_i = Y_n \}).$$\

\vspace{-0.4cm}

The environment $\omega$ can be initially fixed or randomly determined according to a probability measure in $\Omega$. We will denote by $\omega^{M,p}$ the homogeneous environment with an initial amount of $M$ cookies in each site and jump probability $p$, i.e.\

\vspace{-0.5cm}

$$\omega(x,k) = \begin{cases} p, \mbox{ if } k \leq M  \mbox{,} \\ \vspace{-0.5cm} \\ \frac{1}{2}, \mbox{ if } k > M. \end{cases}$$\

\vspace{-0.1cm}

In \cite{Zerner2004} it was obtained a Law of Large Numbers for the ERW in general environments, proving that the walk's speed, given by\ 

\vspace{-0.3cm}

$$\displaystyle{ a.s.- \lim_{n \rightarrow \infty} \frac{Y_n}{n}},$$\

\vspace{-0.1cm}

\noindent exists and that, for $p<1$ and $M=2$, the ERW has speed zero. After that, Mountford, Pimentel and Valle \cite{GlaucoEx} proved the following theorem (Theorem 1.1 of that article):

\vspace{0.1cm}

\begin{theorem}
\label{teo_glauco}

For the ERW starting at $0$ in the environment $\omega^{M,p}$, we have that:\

\vspace{0.1cm}

(i) For every $p \in \left( \frac{1}{2},1 \right)$, there exists $M_0 = M_0(p)$ sufficiently large such that the walk's speed is positive for all $M > M_0$.\

\vspace{0.1cm}

(ii) If $p$ and $M$ satisfy $M(2p-1) \in (1,2)$, then ERW is transient, but with speed zero.\

\end{theorem}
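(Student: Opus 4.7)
The plan is to reduce both parts of the theorem to the analysis of a single auxiliary Markov chain built from the down-crossing counts of the walk. For each $N \ge 1$, set $\tau_N = \inf\{n \ge 0 : Y_n = N\}$ and let $Z_k^{(N)}$ denote the number of jumps from $k$ to $k-1$ during $[0,\tau_N]$, for $1 \le k \le N$. Edge-counting yields the key identity $\tau_N = N + 2\sum_{k=1}^N Z_k^{(N)}$, up to a lower-order contribution from any excursion of $Y$ into the negative half-line, so by the law of large numbers in \cite{Zerner2004} the speed satisfies $v = \lim_N N/\tau_N$; hence positive speed is equivalent to $\limsup_N \frac{1}{N} E[\sum_k Z_k^{(N)}] < \infty$, while in the transient regime divergence of this average forces $v=0$. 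Reading the indices backwards from $N$, the sequence $(Z_{N-j}^{(N)})_{j \ge 0}$ is Markovian: conditional on $Z_{k+1}^{(N)} = z$ the walk must eventually produce exactly $z+1$ rightward jumps out of $k$, and in the environment $\omega^{M,p}$ these are independent Bernoulli trials with success probability $p$ for the first $M$ visits and $1/2$ thereafter, with $Z_k^{(N)}$ being the number of failures before the $(z+1)$st success. A direct computation then gives, for $z$ large,
\[
E\bigl[\, Z_k^{(N)} \,\big|\, Z_{k+1}^{(N)} = z \,\bigr] \;=\; z \;+\; 1 \;-\; M(2p-1) \;+\; O(1/z),
\]
so the backward chain behaves asymptotically like a random walk on $\Z_{\ge 0}$ with drift $1-\delta$, where $\delta := M(2p-1)$.

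For part (i), I would fix $p \in (1/2,1)$ and choose $M$ so large that $\delta \gg 2$. A Lyapunov argument with the test function $z \mapsto z$ then shows the backward chain is positive recurrent with a stationary distribution $\pi$ of finite first moment; coupling the chain started from $0$ with its stationary version gives $\sup_{N,k} E[Z_k^{(N)}] < \infty$, whence $E[\tau_N] = O(N)$ and $\liminf_N N/\tau_N > 0$, i.e.\ positive speed. The technical nuisance is the non-homogeneity of the offspring law at small values of $z$; I would dispose of this by stochastically dominating the one-step law by a suitable geometric distribution and invoking standard subcritical branching-process-with-migration estimates.

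For part (ii), with $\delta \in (1,2)$ the drift $1-\delta$ is still negative, so the same Lyapunov argument yields a unique stationary law $\pi$ and transience of $Y_n \to +\infty$. The additional step, and the heart of the argument, is to establish the polynomial tail
\[
\pi\bigl(\{z \ge x\}\bigr) \;\sim\; c\, x^{-(\delta-1)} \qquad (x \to \infty),
\]
which I would derive from Goldie's implicit renewal theorem applied to the stochastic recursion satisfied by the backward chain. Since this tail has infinite mean precisely when $\delta \le 2$, the ergodic theorem gives $\frac{1}{N}\sum_k Z_k^{(N)} \to \infty$ almost surely, hence $\tau_N/N \to \infty$ and therefore $Y_n/n \to 0$. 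The main obstacle is verifying the hypotheses of Goldie's theorem, in particular non-arithmeticity and the existence of the Cram\'er exponent $\alpha = \delta - 1$ solving the relevant multiplicative equation, for which a careful analysis of the offspring generating function near the boundary of its convergence domain is required.
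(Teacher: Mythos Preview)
The paper does not contain a proof of Theorem~\ref{teo_glauco}: it is quoted as Theorem~1.1 of Mountford, Pimentel and Valle \cite{GlaucoEx}, and the present paper's contribution is only the counter-example in Section~2 showing that part~(i) cannot be extended to ergodic random environments. There is therefore no ``paper's own proof'' against which to compare your proposal.

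That said, a brief assessment. Your outline via the backward down-crossing chain $(Z_k^{(N)})$ and the identity $\tau_N = N + 2\sum_k Z_k^{(N)}$ is the approach later developed by Basdevant--Singh and by Kosygina--Zerner \cite{Zerner2008}; the original argument in \cite{GlaucoEx} is different and proceeds through direct crossing-time and regeneration estimates rather than through a branching-type Markov chain. Your drift computation $E[Z_k\mid Z_{k+1}=z]=z+1-M(2p-1)+o(1)$ and the Lyapunov/positive-recurrence step for part~(i) are correct in spirit. For part~(ii), however, appealing to Goldie's implicit renewal theorem is a genuine misstep: the one-step recursion for $Z_k$ is additive with noise of order $\sqrt{Z_{k+1}}$ (a Lamperti-type chain), not a multiplicative stochastic recursion $X\stackrel{d}{=}AX+B$, so there is no Cram\'er exponent in the sense you describe and Goldie's hypotheses are not met. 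The polynomial tail $\pi([x,\infty))\asymp x^{-(\delta-1)}$ that you need is obtained in \cite{Zerner2008} via branching processes with migration (total-progeny tails), and that is the machinery you would have to invoke to make the $\delta\in(1,2)$ case go through.
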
 

\vspace{0.1cm}

The authors remarked that Theorem \ref{teo_glauco} (i) cannot be generalized to ergodic random environment with an average number of cookies per site greater than $M$ (see remark \ref{remark1} below) and they gave a very brief description of a counter-example construction to verify that (without much explanation, since this remark was not one of the main objectives of that article). But considering the complexity of the counter-example, we think that is hard for the reader to fully understand how it is constructed and why it really works for this purpose. So, our objective here is to give a very detailed construction of an ergodic environment where the average number of cookies in each site is infinite and the ERW has zero speed.\

\vspace{0.1cm}

\begin{remark}
\label{remark1}
In Kosygina and Zerner \cite{Zerner2008}, Theorem \ref{teo_glauco} is extended, giving sufficient conditions for ERW in random environments to have positive speed (Theorem 2 of that article).\

\end{remark}

\vspace{0.2cm}

More details and results related to ERW's can be found in \cite{ZernerSurvey}.


\section{Construction of the counter-example}

Let's define an ERW $(Y_n)_{n\geq 0}$ in the following way: Fix $(Z_j)_{j\geq 1}$ and $(Z^{-}_j)_{j\geq 1}$ sequences of independent and identically distributed random variables such that:\

\vspace{-0.4cm}

$$P(Z^{-}_j = -(2^n)) = P(Z_j=2^n) = \frac{\gamma}{4^{\epsilon n}},$$

\vspace{0.1cm}

\noindent for all $n\geq 2$, where $\gamma = \left(\displaystyle{\sum^{\infty}_{n=2}}\frac{1}{4^{\epsilon n}} \right)^{-1}$ and $\epsilon > 0$ is a constant that will be fixed later.\


Note that, if $\epsilon > 0.5$, the random variables $Z_j`s$ have finite first moment. From now on we will consider $\frac{1}{2} < \epsilon < 1$.\


Now fix $p \in \left( \frac{1}{2} , 1 \right)$ and consider the following environment:\

\vspace{-0.2cm}

$$\tilde{\omega}(x,j) = \begin{cases} \frac{1}{2}, \mbox{if } x \in \left[\displaystyle{\sum^{l}_{i=1}}Z_i + 1,\displaystyle{\sum^{l+1}_{i=1}}Z_i - 1 \right] \mbox{or } x \in \left[\displaystyle{\sum^{l}_{i=1}}Z^{-}_i - 1,\displaystyle{\sum^{l+1}_{i=1}}Z^{-}_i + 1 \right] \mbox{,} \\ \ \ \ \vspace{-0.4cm} \\ p, \mbox{ if } x = \displaystyle{\sum^{l}_{i=1}}Z_i \mbox{ or }  x = \displaystyle{\sum^{l}_{i=1}}Z^{-}_i  \mbox{, for any $l$ and $j$}. \end{cases}$$\

That is, for each value of $Z_j$ and $Z^{-}_j$, we have an interval of size $Z_j - 1$ without cookies; and in each vertex between them, we put infinite cookies. In figure \ref{paeex} we have the ilustration of the environment $\tilde{\omega}$\

\vspace{0.1cm}

\begin{figure}[h]

\centering

\includegraphics[scale=0.48]{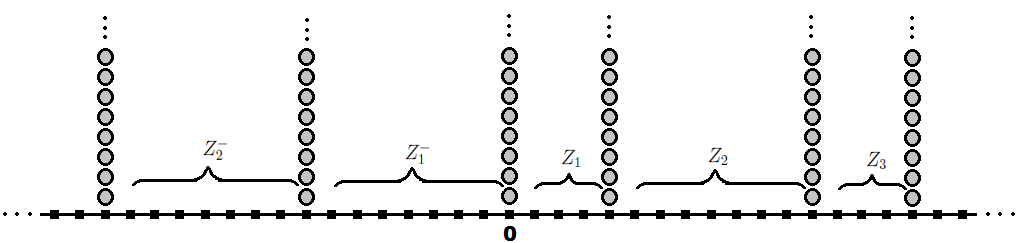}

\caption{ilustration of the environment $\tilde{\omega}$, where we have intervals of size $Z_j - 1$ without cookies and infinite cookies in their border.}

\label{paeex}

\end{figure}

The environment $\tilde{\omega}$ is not ergodic, but now we will construct an ergodic environment based on $\tilde{\omega}$, which we denote by $\omega^{*}$.

\vspace{-0.2cm}

$$ {\omega^{*}} (x,i) = \tilde{\omega}(x-U,i),$$\

\vspace{-0.2cm}

\noindent where $U|Z^{-}_1$ has uniform distribution in $\{ 0,1,2,\ldots,Z^{-}_1 - 1 \}$.\

It means that our environment $\omega^{*}$ is obtained by just take $\tilde{\omega}$ and translate the positions with infinite cookies by $U$ to the left, where $U$ is drawn uniformly among the integers between $0$ and $Z^{-}_1-1$. Since the expected value of $Z_j$ and $Z^{-}_j$ are both finite, we have that $E[M_z] = \infty$ for all $z \in \mathbb{Z}$ in environment $\omega^{*}$.\

Consider that $(Y_n)_{n\geq 0}$ is the ERW associated to $\omega^{*}$. We will show that $Y_n$ has zero speed almost surely. To simplify the notation, we will assume during the proof that we have $U|Z^{-}_1 = 0$ (which is equivalent to say that we will consider $\omega^{*}$ = $\tilde{\omega}$), but all arguments that we will use during the proof clearly works for any possible value of $U|Z^{-}_1$. This gives the example that we claimed to obtain.\

\vspace{0.3cm}

Define $T_K = inf\{n\geq0 : Y_n = K \}$, the first time that the walk reach the position $K$. To prove that this process has zero speed we will show that $\displaystyle{\limsup_{K \rightarrow \infty} \frac{T_K}{K}}=\infty$ almost sure. This is a standard approach and is the same used in \cite{Zerner2004} to prove the existence of $\displaystyle{\lim_{n \rightarrow \infty} \frac{Y_n}{n}}$ for ERW (Theorem 13 of that article).\

Now, since the variables $T_K`s$ are a.s. finite, it's enough to show that the following proposition holds:\

\vspace{0.1cm}

\begin{proposition}
\label{prop_ap}

For all $n \in \mathbb{N}$, $P \left( \displaystyle{\lim_{K \rightarrow \infty} \frac{T_K}{K}} \geq c4^{(1-\epsilon)n} \right) =1$, where $c > 0$ is a constant that doesn't depend of $n$ and $K$.\

\end{proposition}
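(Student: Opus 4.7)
The plan is to identify, for each fixed $n$, a macroscopic sequence of obstacles in the environment---the intervals $(\sum_{i=1}^{j-1} Z_i,\sum_{i=1}^{j} Z_i)$ for which $Z_j \ge 2^n$---and to show that the walker is forced to spend order $4^n$ units of time at each such obstacle, whereas the spatial density of obstacles of this size is only of order $4^{-\epsilon n}$; the ratio of these two gives the claimed $4^{(1-\epsilon)n}$.

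Fix $n$, call the index $j$ \emph{$n$-large} when $Z_j \ge 2^n$, and write $P_j := \sum_{i=1}^{j} Z_i$ for the positions of the infinite-cookie vertices. Since the $Z_j$ are i.i.d., $P(Z_1 \ge 2^n) = C_0\,4^{-\epsilon n}$ for an explicit $C_0 > 0$, and $\mu := E[Z_1]$ is finite because $\epsilon > 1/2$. Let $J_m$ denote the index of the $m$-th $n$-large variable. Applying the strong law once to the Bernoulli indicators $\mathbf{1}\{Z_j \ge 2^n\}$ and once to $(Z_j)$ yields $P_{J_m}/m \to (\mu/C_0)\,4^{\epsilon n}$ almost surely.

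The crucial ingredient is a lower bound on the time $\sigma_j$ the walker spends on excursions into $(P_{j-1},P_j)$ starting at $P_{j-1}+1$ during $[\tau_{P_{j-1}},\tau_{P_j}]$. Each such excursion is an unbiased SRW absorbed at $\{P_{j-1},P_j\}$; by gambler's ruin it reaches $P_j$ with probability only $1/Z_j$, its unconditional expected duration is $Z_j-1$, and an elementary calculation shows the \emph{conditional} expected duration of a failing excursion is also of order $Z_j$. Since the cookies at $P_j$ are infinite, the excursions are, conditionally on the environment, i.i.d.\ within one interval and independent across disjoint intervals (strong Markov). The number of failures before the first success dominates a geometric variable with parameter $1/Z_j$, so exceeds $Z_j/2$ with probability at least $1-e^{-1/2}$; conditional on this, Chebyshev applied to the sum of $\Omega(Z_j)$ i.i.d.\ excursion durations yields
\[
P\bigl(\sigma_j \ge c\,Z_j^{2}\bigr) \;\ge\; 1 - e^{-\eta\,Z_j},
\]
whose complement, for $j$ running over the $n$-large indices, is summable in $m$; Borel--Cantelli therefore secures $\sigma_{J_i} \ge c\,4^n$ for all but finitely many $i$, almost surely.

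Combining the two ingredients, $T_{P_{J_m}} \ge \sum_{i=1}^m \sigma_{J_i} \ge (c/2)\,m\,4^n$ for all large $m$, hence $T_{P_{J_m}}/P_{J_m} \ge c''\,4^{(1-\epsilon)n}$ a.s.; monotonicity $T_K \ge T_{P_{J_m}}$ for $K \ge P_{J_m}$ together with $P_{J_{m+1}}/P_{J_m} \to 1$ (another consequence of the strong law) promotes this to $\liminf_{K}T_K/K \ge c''\,4^{(1-\epsilon)n}$ almost surely, which is the statement of the proposition (the ``$\lim$'' being understood as $\liminf$, since in fact $T_K/K \to \infty$). The principal obstacle is the high-probability crossing-time lower bound on $\sigma_j$: a bare expectation estimate will not do, since to sum meaningfully over the $m$ obstacles one needs a summable lower-tail inequality, and this in turn hinges on correctly estimating the \emph{conditional} duration of a failing SRW excursion before applying a deviation bound. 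The rest---renewal limits for the positions of the $n$-large intervals, and conditional-on-environment independence of excursions in disjoint intervals---is bookkeeping.
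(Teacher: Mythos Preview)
Your overall architecture---count the $n$-large intervals via the SLLN/renewal argument and lower-bound the crossing time of each---is precisely the paper's strategy, but the aggregation step contains a real error. You assert that the complement probabilities $P(\sigma_{J_m} < c\,Z_{J_m}^2) \le e^{-\eta Z_{J_m}} \le e^{-\eta\,2^n}$ are ``summable in $m$'' and then invoke Borel--Cantelli to get $\sigma_{J_m}\ge c\,4^n$ for all but finitely many $m$. They are not summable: for fixed $n$ this upper bound is a \emph{constant} in $m$, so the series diverges. (Separately, Chebyshev never produces an exponential tail; with the variance estimate you implicitly use, you get at best $P(\sigma_j < c\,Z_j^2)=O(1/Z_j)$, which is still constant in $m$.) In fact, since the $\sigma_{J_m}$ are conditionally independent given the environment and each one fails the bound $\sigma_{J_m}\ge c\,4^n$ with some fixed positive probability (the very first right-excursion can succeed and be short), the second Borel--Cantelli lemma shows that \emph{infinitely many} of them fail almost surely; the ``all but finitely many'' conclusion is simply false.

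The fix is straightforward and is exactly what the paper does: do not try to make every obstacle slow. It suffices to show that each obstacle is slow with probability at least some $\beta>0$ uniform in $n$ (the paper obtains this via a coupling with reflected SRW and Donsker's theorem; your excursion/gambler's-ruin computation would equally well deliver such a $\beta$). Given at least $\tfrac{\alpha}{2}K\,4^{-\epsilon n}$ obstacles in $[0,K]$, the number of slow ones then stochastically dominates a $\mathrm{Bin}\bigl(\lceil\tfrac{\alpha}{2}K\,4^{-\epsilon n}\rceil,\beta\bigr)$ variable, and the law of large numbers for the binomial shows this exceeds $cK\,4^{-\epsilon n}$ with probability tending to $1$ as $K\to\infty$, for any $c<\alpha\beta/2$. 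Multiplying by $4^n$ yields the proposition. So keep your density lemma and your per-obstacle lower bound (with a constant-probability conclusion, not the exponential one), and replace Borel--Cantelli by this binomial/LLN aggregation.
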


\vspace{0.3cm}

To prove this proposition we will first state and prove two lemmas.\

\vspace{0.3cm}

\begin{lemma}
\label{lema_ap1}

Conditioned to $Z_l \geq 2^n$, the time spent by $(Y_n)_{n\geq 0}$ to cross an interval $\left[\displaystyle{\sum^{l}_{i=1}}Z_i + 1,\displaystyle{\sum^{l+1}_{i=1}}Z_i - 1 \right]$ is at least $4^n$, with probability higher than a constant $\beta > 0$, not depending on $n$.\

\end{lemma}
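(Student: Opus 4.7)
Write $a = \sum_{i=1}^{l} Z_i$ and $b = \sum_{i=1}^{l+1} Z_i$, so that the interval in the statement is $[a+1, b-1]$ and $a,b$ are the two adjacent cookie sites; set $L = b - a$, and on the conditioning event take $L \geq 2^n$ (adjusting indices if needed). Inside $[a+1,b-1]$ the environment assigns probability $1/2$ to each direction, so the ERW restricted to that interval is an honest simple symmetric random walk (SSRW); only at $a$ does the cookie bias $p>1/2$ play a role. The guiding heuristic is classical diffusive scaling: a SSRW on an interval of length $L$ takes time of order $L^2$ to cross, and since $L^2 \geq 4^n$, one expects the crossing time $\tau$ to satisfy $\tau \geq 4^n$ with a probability that does not degenerate as $n\to\infty$.

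The plan is to compare $\tau$ with the crossing time $\tau^R$ of a SSRW on $\{a,a+1,\dots,b\}$ reflected at $a$, i.e.\ a walk that is sent deterministically from $a$ to $a+1$. Heuristically the ERW can only be slower than the reflected walk: at every visit to $a$ it has probability $1-p$ of escaping to $a-1$ and of having to complete a full left excursion before coming back, whereas $R$ never pays this cost. I would make this precise by a coupling that uses the same $\pm1$ increments in the interior of $(a,b)$ for both processes, and then check inductively that, as long as $Y\in[a,b]$, the reflected walk $R$ is never behind $Y$. This delivers the stochastic bound $\tau \geq \tau^R$.

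For the reflected walk, Donsker's invariance principle applied to the rescaled process $L^{-1}(R_{\lfloor L^{2}t\rfloor} - a)$ shows that $\tau^R/L^2$ converges in distribution, as $L\to\infty$, to $T_1$, the first hitting time of $1$ by a reflected Brownian motion on $[0,1]$ started at $0$. Since $T_1$ is a non-degenerate continuous random variable, $P(T_1\geq 1)=:\beta_0>0$, hence $P(\tau^R \geq L^2)\geq \beta_0/2$ once $L$ is large enough. Combined with Step~1 and $L^2\geq 4^n$, this yields
\[
P\bigl(\tau\geq 4^n \,\big|\, Z_l\geq 2^n\bigr) \;\geq\; \beta
\]
for some universal constant $\beta>0$; the finitely many small-$n$ cases can be handled by a direct hitting-probability computation and absorbed into $\beta$.

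The main obstacle I foresee is making the coupling in Step~1 rigorous, because the ERW can make long excursions into the cookie-bearing region to the left of $a$ (governed by the earlier $Z_j$'s and the $Z^-_j$'s), during which it evolves independently of $R$. My intended way around this is to compare $R$ not with $Y$ itself but with the trace of $Y$ on $[a,b]$: every left excursion only adds time to $\tau$, so it is enough to prove that this trace reaches $b$ no earlier than $R$, which reduces the task to a one-site boundary comparison at $a$ exploiting the inequality $p>1/2$.
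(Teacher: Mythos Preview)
Your proposal is correct and follows essentially the same route as the paper: couple the ERW with a simple symmetric walk reflected at the left endpoint, argue that the ERW can only be slower, and then invoke Donsker's invariance principle so that the limiting probability is that of reflected Brownian motion hitting $1$ after time $1$, with the finitely many small $n$ handled separately. Your trace-on-$[a,b]$ idea for dealing with the left excursions is a clean variant of the paper's coupling (the paper lets the two walks run independently after a split at $a$ until they re-meet and asserts $Y$ stays weakly to the left of the reflected walk); both lead to the same stochastic comparison $\tau\geq\tau^R$.
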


\vspace{0.01cm}

\begin{proof}

Since we don't have cookies in the interval given at the statement, $Y_n$ will have the same behavior of a symmetric simple random walk. \

Denote by $X_n$ the symmetric simple random walk with reflection in the origin and let $(X_n, Y_n)$ be the coupling of $X_n$ with $Y_n$ after it enters inside an interval of size $2^n-1$ without cookies. That is, these two process will evolve together, except if $(X_n, Y_n)$ return to the left boundary of the interval. In this case $X_n$ jumps to the right with probability 1 and $Y_n$ with probability $p$, and then, at this point, these two random walks can split. If it happens, $X_n$ and $Y_n$ will move independently until they meet each other again.\

Note that $Y_n$ will be always in the same position of $X_n$ or at its left, and consequently, the probability of the event $\{Y_n$ requires a time greater than $4^n$ to cross the interval of size $2^n-1$ without cookies$\}$ is limited from below by the probability of the event $\{X_n$ reach the position $2^n$ for the first time in $t>4^n\}$. By Donsker invariance principle (see, e.g., section 8.6 of \cite{Durret}), this probability will converge to the probability of the event $\{$A standard Brownian motion with total reflection in origin hit the value $1$ for the first time after a time greater than $1\}$.\

Denoting by $B_t$ the position of a standard Brownian motion at time $t$ and letting $\tilde{T_a} = min \{t ; \left| B_t \right| \geq a \}$, standard operations gives us that $P(\tilde{T_{1}} > 1) = 1 - \Phi(1)$, where $\Phi(\bullet)$ is the distribution function of a standard normal. So, there exists $N_0 \in \mathbb{N}$ such that $P(T_{2^n} \geq 4^n) \geq 0.3$ for all $n \geq N_0$. Beside that, for $n < N_0$ we have that $\displaystyle{ \min_{2 \leq n < N_0} P(T_{2^n} \geq 4^n)} > (1-p)^{4N_0} > 0$.\







To conclude, we can fix $\beta = \min \displaystyle{ \left( 0.3 \ , \min_{2 \leq n < N_0} P(T_{2^n} \geq 4^n) \right)}$ and then we have that the probability that the ERW $(Y_n)_{n\geq 0}$ will require a time greater than $4^n$ to cross an interval of size $2^n-1$ without cookies is at least $\beta$.

\end{proof}

\begin{lemma}
\label{lema_ap2}

Denote by $N_n(K)$ the number of intervals with size $2^n-1$ without cookies and with right boundary in $[0,K]$, that is,

\vspace{-0.2cm}

$$N_n(K) = \# \left\{l: \displaystyle{\sum^{l}_{i=1}}Z_i < K \mbox{, } Z_l = 2^n \right\}.$$


\noindent There exists a constant $\alpha > 0$ such that:\

\vspace{-0.1cm}

$$P \left( \displaystyle{\liminf_{K \rightarrow \infty} \frac{N_n(K)}{K}} \geq \frac {\alpha}{4^{\epsilon n}}  \right)=1, \ \forall n \in \mathbb{N}.$$\

\end{lemma}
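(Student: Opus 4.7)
The plan is to apply the strong law of large numbers twice and then compose the two limits. The first SLLN will tell us how many i.i.d.\ pieces $Z_i$ are needed to traverse an interval of length $K$, and the second will tell us what fraction of those pieces happen to equal $2^n$. Composing gives linear growth of $N_n(K)$ in $K$ with slope $\gamma/(\mu\, 4^{\epsilon n})$, where $\mu := \E[Z_1]$, so the lemma will follow with $\alpha$ any positive constant not exceeding $\gamma/\mu$ (this is uniform in $n$, as required).

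First I would establish $\mu < \infty$: since $\frac{1}{2} < \epsilon < 1$ we have $2 \cdot 4^{-\epsilon} < 1$, hence
$$\mu = \gamma \sum_{n \geq 2} \frac{2^n}{4^{\epsilon n}} < \infty.$$
The SLLN then gives $S_L/L \to \mu$ almost surely, where $S_L = \sum_{i=1}^L Z_i$. Defining $L(K) = \max\{l \geq 0 : S_l < K\}$, the bound $Z_i \geq 4$ forces $S_L \to \infty$ a.s., and hence $L(K) \to \infty$ as $K \to \infty$. The sandwich $S_{L(K)} < K \leq S_{L(K)+1}$ then yields $L(K)/K \to 1/\mu$ almost surely.

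Next, the indicators $(\mathbf{1}\{Z_l = 2^n\})_{l\geq 1}$ are i.i.d.\ Bernoulli with mean $\gamma/4^{\epsilon n}$, so the SLLN gives
$$\frac{1}{L} \sum_{l=1}^{L} \mathbf{1}\{Z_l = 2^n\} \xrightarrow[L\to\infty]{\text{a.s.}} \frac{\gamma}{4^{\epsilon n}}.$$
Noting that $N_n(K) = \sum_{l=1}^{L(K)} \mathbf{1}\{Z_l = 2^n\}$ and that $L(K) \to \infty$ almost surely, I can compose along the random subsequence $L(K)$ to get $N_n(K)/L(K) \to \gamma/4^{\epsilon n}$ a.s. Multiplying by the previously established $L(K)/K \to 1/\mu$ yields
$$\frac{N_n(K)}{K} \;=\; \frac{N_n(K)}{L(K)} \cdot \frac{L(K)}{K} \;\xrightarrow[K\to\infty]{\text{a.s.}}\; \frac{\gamma}{\mu\, 4^{\epsilon n}},$$
so the $\liminf$ in the statement is actually a genuine limit, and taking $\alpha = \gamma/\mu$ (or any smaller positive constant) completes the argument.

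I do not expect a real obstacle here; everything is textbook probability. The only points requiring care are confirming that $\mu$ is finite precisely thanks to $\epsilon > 1/2$, and making sure that composing the SLLN for the indicators with the subsequence $L(K)$ is legitimate — which it is, because $L(K) \to \infty$ almost surely and the indicator averages converge along the full integer sequence.
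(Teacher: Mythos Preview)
Your proof is correct and follows essentially the same approach as the paper: both arguments combine the renewal law of large numbers $L(K)/K \to 1/\mu$ (the paper calls $L(K)$ by $N(K)$) with the SLLN for the indicators $\mathbf{1}\{Z_l = 2^n\}$. The only cosmetic difference is that you compose the two limits directly via the product $\frac{N_n(K)}{K} = \frac{N_n(K)}{L(K)}\cdot\frac{L(K)}{K}$ and thereby obtain $\alpha = \gamma/\mu$, whereas the paper routes through the eventual lower bound $N(K) > K/(2\mu)$ and ends with $\alpha = \gamma/(2\mu)$; your version is a bit tidier but not materially different.
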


\begin{proof}

\vspace{0.1cm}

Let $N(K)$ Be the number of sites with infinite cookies contained in $[0,K]$, that is, $N(K)=\# \left\{l: \displaystyle{\sum^{l}_{i=1}}Z_i < K \right\}$.\

\vspace{0.1cm}

Note that $N(K)$ is a renewal process where the time between renewals  is given by the $Z_j's$. So, applying the Law of Large Numbers to renewal processes, we have:\

\vspace{-0.2cm}

$$P \left( \displaystyle{\lim_{K \rightarrow \infty}\frac{N(K)}{K}} = \frac{1}{E(Z_j)} \right) = 1.$$\

\vspace{0.1cm}

besides that:

$$ \left\{ \displaystyle{\lim_{K \rightarrow \infty}\frac{N(K)}{K}} = \frac{1}{E(Z_j)} \right\} \subset \bigcup_{K_0} \left\{ N(K) > \frac{K}{2E[Z_j]}, \hspace{0.1cm} \forall K > K_0 \right\}.$$\


Since we have an increasing union of events, it follows that:

$$\displaystyle{\lim_{K_0 \rightarrow \infty} P \left( N(K) > \frac{K}{2E[Z_j]}, \hspace{0.1cm} \forall K > K_0 \right)} = 1.$$\

\vspace{0.2cm}

Consequently, for all $K > K_0$, we have:

$$P \left( \displaystyle{\liminf_{K \rightarrow \infty} \frac{N_n(K)}{K} \frac{1}{2E[Z_j]}} \geq P(Z_j = 2^n) \right) \geq $$

\vspace{0.2cm}

$$ \geq P \left( \displaystyle{\liminf_{K \rightarrow \infty}\frac{N_n(K)}{N(K)}} \geq P(Z_j = 2^n) \right) \geq $$

\vspace{0.2cm}

$$ \geq P \left( \displaystyle{\liminf_{K \rightarrow \infty}\frac{N_n(K)}{N(K)}} \geq P(Z_j = 2^n) \left| N(K) > \frac{K}{2E[Z_j]}, \forall K > K_0 \right. \right) \times $$ 

\vspace{-0.2cm}

$$ \times P \left(N(K) > \frac{K}{2E[Z_j]}, \forall K > K_0 \right).$$\

When $K_0 \rightarrow \infty$ we have by the Law of Large Numbers that this last product of probabilities will be approximately:

\vspace{-0.1cm}

$$P \left( \displaystyle{\lim_{N \rightarrow \infty}\frac{1}{N} \sum_{j=1}^{N} I_{\{Z_j = 2^n\}}} = P(Z_j = 2^n) \right) = 1.$$\

Therefore, for all $K > K_0$,

\vspace{-0.1cm}

$$N_n(K) \geq P(Z_j = 2^n) \frac{K}{2E(Z_j)} = \frac{\gamma}{4^{\epsilon n}} \frac{K}{2E(Z_j)}.$$\

To conclude, we just need to take $\alpha = \frac{\gamma}{2E(Z_j)}$ to satisfy the lemma's inequality.
\end{proof}

\vspace{0.4cm}

Now we will prove the Proposition \ref{prop_ap}:\\

\vspace{-0.2cm}

\begin{proof}

Fix $n \geq 2$. Almost surely there exists a $K_0 \in \mathbb{N}$ such that\

\vspace{-0.2cm}

$$\left\{ \displaystyle{\liminf_{K \rightarrow \infty} \frac{N_n(K)}{K}} \geq \frac {\alpha}{4^{\epsilon n}}  \right\} \subset \left\{ \frac{N_n(K)}{K} \geq \frac {\alpha 4^{- \epsilon n}}{2}, \  \forall K > K_0 \right\}.$$\

Now we can write:\

\vspace{-0.2cm}

$$P \left( \displaystyle{\sup_{K} \frac{T_K}{K}} \geq c4^{(1-\epsilon)n} \right) \geq P \left( \displaystyle{\sup_{K} \frac{T_K}{K}} \geq c4^{(1-\epsilon)n}, \frac{N_n(K_0)}{K_0} \geq \frac {\alpha 4^{- \epsilon n}}{2}  \right) =$$


$$= P \left( \displaystyle{\sup_{K} \frac{T_K}{K}} \geq c4^{(1-\epsilon)n} \left| N_n(\tilde{K}) \geq \frac {\alpha 4^{- \epsilon n}}{2} \tilde{K} \right. \right) P \left( N_n(\tilde{K}) \geq \frac {\alpha 4^{- \epsilon n}}{2} \tilde{K} \right).$$\

From Lemma \ref{lema_ap2}, we have that the probability in the rightmost term of the product above converge to $1$ when $\tilde{K} \rightarrow \infty$. To analyze the first probability, let's consider the following event:\

\vspace{0.2cm}

$E_k = \left\{ \right.$For at least $\frac{cK}{4^{\epsilon n}}$ intervals of size $2^n$ among $\left\lceil \frac{\alpha K 4^{- \epsilon n}}{2}\right\rceil$ available intervals we have a crossing time greater than $4^n \}$.\

\vspace{0.2cm}

Now we have:\


$$P \left( \displaystyle{\sup_{K} \frac{T_K}{K}} \geq c4^{(1-\epsilon)n} \left| N_n(K_0) \geq \frac {\alpha 4^{- \epsilon n}}{2} K_0 \right. \right) \geq P(E_{K_0}).$$\

\vspace{0.1cm}

Note that, conditioned to the choice of the environment, we have that $Y_n$ is a non-homogeneous Markov process, indeed the stock of cookies in each site is constant zero or infinite; and then the jump probabilities do not depend on the history of the process. So, by the markov property and recalling from Lemma \ref{lema_ap1} that an interval of size $2^n-1$ without cookies will require a time greater than $4^n$ with a probability higher than $\beta$, we have:\


$$P(E_{K_0}) \geq P \left( Bin \left( \left\lceil  \frac {\alpha K_0 4^{- \epsilon n}}{2} \right\rceil, \beta \right) \geq \frac{c K_0}{4^{\epsilon n}} \right).$$\


Since $K_0$ will be taken large enough, the first parameter of this binomial will be at least one. To conclude, note that, by the Law of Large Numbers, this probability converge to $1$ when $K_0 \rightarrow \infty$, if the expected value of this binomial is greater than $\frac{c K_0}{4^{\epsilon n}}$. To achieve that, we just need to take $c < \frac{\alpha \beta}{2}$. 
\end{proof}

\vspace{0.1cm}

\medskip \smallskip

\noindent{\bf Acknowledgments:} This work was developed by the author as part of his master thesis at Universidade Federal do Rio de Janeiro (UFRJ). So, the author thanks a lot his advisor, Glauco Valle (UFRJ), for all the support during the realization of this work. He also thanks the committee members, Leandro Pimentel (UFRJ) and Remy Sanchis (UFMG), for giving some contribution to improve this work.\\

\medskip \smallskip

\end{document}